\def\Z{\mathbb Z}
\def\mod{\text{ mod }}
\newcounter{NN}
\newtheorem{theorem}[NN]{Theorem}
\newtheorem{conjecture}[NN]{Conjecture}
\newtheorem{lemma}[NN]{Lemma}
\begin{document}
\bibliographystyle{plain}
\title{Somos-4 and Somos-5 are arithmetic divisibility sequences}
\author{Peter H.~van der Kamp}
\date{Department of Mathematics and Statistics\\
La Trobe University, Victoria 3086, Australia\\[5mm]
\today
}

\maketitle

\begin{abstract}
We provide an elementary proof to a conjecture by Robinson that multiples of (powers of)
primes in the Somos-4 sequence are equally spaced. We also show, almost as a corollary, for
the generalised Somos-4 sequence defined by $\tau_{n+2}\tau_{n-2}=\alpha\tau_{n+1}\tau_{n-1}+\beta\tau_n^2$ and initial values $\tau_1=\tau_2=\tau_3=\tau_4=1$, that the polynomial $\tau_n(\alpha,\beta)$ is a divisor of $\tau_{n+k(2n-5)}(\alpha,\beta)$ for all $n,k\in\Z$ and establish a similar result for the generalized Somos-5 sequence.
\end{abstract}

\section{Introduction} \label{s1}
An arithmetic divisibility sequence is neither a divisibility sequence,
nor an arithmetic sequence. We define it to be a sequence $\{f_n\}_{n\in\Z}$
such that there is a function $d:\mathbb Z \mapsto \mathbb{N}$, called the
common difference function, such that
\[
\forall n\in \Z: d(n) \mid (m-n) \implies f_n \mid f_m.
\]
Clearly, if for some sequence $\{f_n\}_{n\in\Z}$ prime powers appear in arithmetic
progressions that sequence is an arithmetic divisibility sequence. Furthermore, we
then have the following nice property: for all $m=n+k d(n)$ and $r=m+s d(m)$, with
$n,k,s\in\Z$, there is an $l\in\Z$ such that $r=n + l d(n)$. Primes do not have to appear
in arithmetic progressions, e.g. consider the periodic
arithmetic divisibility sequence
\[
f_n=\begin{cases}
12 & \text{if } n\equiv 0 \mod 6,\\
1 & \text{if } n\equiv 1,5 \mod 6,\\
6 & \text{if } n \equiv 2,4 \mod 6,\\
4 & \text{if } n\equiv 3 \mod 6.
\end{cases}
\]
where we have even made sure that $f_n \mid f_m \implies d(n) \mid (m-n)$.
The above mentioned nice property also holds if the common difference function is polynomial.
This follows from the fact that $p(x)$ divides $p(x+p(x)q(x))$ for all polynomials $p(x),q(x)$, which can be seen by substituting $y=x+p(x)q(x)$ in $y-x \mid p(y)-p(x)$.

Sequences $\{\tau_n\}$ defined by
\begin{equation} \label{Sok}
\tau_{n+k-2}\tau_{n-2}=\alpha\tau_{n+k-3}\tau_{n-1}+\beta\tau_{n+k-4}\tau_n
\end{equation}
possess the Laurent property, that is, all their terms are Laurent polynomials in their initial values.
The equations (\ref{Sok}) arise as a special case of the Hirota-Miwa equation, whose
integrability condition is equivalent to Laurentness \cite{Mas}. For $k=4$ and $k=5$ these
equations are called Somos-4 and Somos-5. 
The Laurent property implies in particular that if the initial values are $1$, and $\alpha,\beta$ are integers, both recurrences define integer sequences. Considering $\alpha$ and $\beta$ to be variables, we obtain polynomial sequences.
The Laurent property was introduced by Hickerson, cf. \cite{Rob}, to prove the integrality of the next sequence, Somos-6, which does not have the form (\ref{Sok}) but is a special case of the discrete BKP equation \cite{Som,Mas}. Initially the integrality, and secondly the Laurent property itself, seemed to be curious properties \cite{Gal}. By now there is extended literature on equations that possess the Laurent property, have Laurentness, or display the Laurent phenomenon,
\cite{ACH,FoZe,FM,HoneLP,HS,HW,LP,Mas,HK}, and the property is well understood in terms of cluster algebras \cite{FoZeC}, which have had a profound impact in diverse areas of mathematics, cf. \cite{Fom}. The Somos-$k=4,5$ sequences also satisfy a strong Laurent property, e.g. for Somos-4 the terms are Laurent in $\tau_1$, polynomial in $\tau_2,\ldots,\tau_k$, and polynomial in $\cal I$, an additional (invariant) rational function of the initial values \cite{HS}. Therefore, integral sequences may be obtained for other than unit initial values. The Somos-4 sequence with $\alpha=-1$, $\beta=2$ and initial values $\tau_1=\tau_2=1,\ \tau_3=2,\ \tau_4=3$ is an integer sequence which extends the
Fibonacci numbers; we have $\tau_n=(-1)^{n+1}\tau_{-n}$ and $\tau_{n}=\tau_{n-1}+\tau_{n-2}$.

Few results on divisibility properties of {\em the} Somos-($k=4,5$) sequences (with $\alpha=\beta=1$ and $\tau_1=\cdots=\tau_k=1$) are known.
In \cite{EMW} it is shown that every term beyond the fourth of the Somos-4 sequence has a primitive divisor, i.e. a prime which does not divide any preceding term. Jones and Rouse \cite{JR} show that the density of primes dividing at least one term of the Somos-4 sequence is 11/21. In \cite{KMMT} the authors establish that the terms of Somos-4 (with $\alpha=\beta=1$) are irreducible Laurent polynomials in their initial values and pairwise co-prime. Robinson \cite{Rob} showed that, for $k=4,5$, the $i$-th and $j$-th terms of the Somos-$k$  sequence are relatively prime whenever $| i-j |\leq k$. He infers that both sequences are periodic modulo $m$ for every $m\in\mathbb N$. In \cite[section 4]{Rob} he presents results about Somos-4 and Somos-5 obtained by calculation, for which he did not have general proofs. One observation he made is that if a prime $p$ divides any term of Somos-$k$, then the multiples of $p$ are equally spaced. In proof he added that Clifford S. Gardner proved this result, but a reference was not provided. This conjecture and generalisations have been proven by C.S. Swart in her thesis \cite{Swa}. She also proved, for more general Somos-4 sequences, that either all multiples of a prime $p$ are divisible by exactly the same power of $p$, or $\exists r\in\mathbb{N} \forall k\geq r \in\mathbb N$ some term is divisible by $p^k$ exactly, cf. section \ref{s7}. The following problem is still open: to prove that for particular Somos sequences for all but finitely many primes $p$ the multiples of $p$ are not divisible by exactly the same power of $p$.

In this short paper we provide a simple proof, in section \ref{s4}, that for all primes $p\in\mathbb P$ the set $\{n\in\mathbb Z: p^k\mid \tau_n\}$ has either less than 2 elements, or its elements form a complete arithmetic sequence. This also holds for rational Somos-$k=4,5$ sequences, for primes that do not appear in the denominator of the invariant function, as long as the initial conditions are pairwise co-prime. It also holds true in elliptic divisibility sequences, see section \ref{s3}.
We employ an elementary result from number theory, which we haven't found in the literature, namely that the terms of a subset $S\subset\mathbb{Z}$ with the property that $2s-t\in S$ for all $s,t\in S$, form an arithmetic sequence. A proof of this result is presented in section \ref{s2}. Combining the result on the appearance of primes with a symmetry of the equation and taking unit initial values we find Somos-$k=4,5$ sequences which are arithmetic divisibility sequences with common difference function $d_k(n)=2n-k-1$, i.e. we prove the polynomial divisibility
\[
\tau_n(\alpha,\beta) \mid \tau_{n+ld_k(n)}(\alpha,\beta) \quad \forall n,l\in\Z.
\]
We briefly investigate different initial values which give rise to Laurent sequences with the same divisibility property, as well as initial values that give rise to polynomial sequences with slightly different divisibility properties. We note that if one could prove for odd primes $p$ that the multiples of
$p$ in {\it the} Somos-4 \& 5 sequences are not divisible by exactly the same power of $p$ then we
would have the following.
\begin{conjecture} \label{conj}
Let $\tau_n$ be the terms of the Somos-$k$ sequence with $k=4,5$, $\alpha=\beta=1$, $\tau_1=\cdots=\tau_k=1$, $d=2n-k-1$ and
$q=\tau_n$ when $(k+1)\nmid n$ or $q=\tau_n/2$ when $(k+1)\mid n$. Then
\[
q^{m+1}\mid \tau_l \Leftrightarrow l = n + (\frac{q^m-1}{2}+kq^m) d.
\]
\end{conjecture}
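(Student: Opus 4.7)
The plan is to bootstrap from the paper's main theorem of section~4, which establishes that for every prime power $p^j$ the set $A_j=\{l\in\Z:p^j\mid\tau_l\}$ is either sparse (fewer than two elements) or a complete arithmetic progression. Under the hypothesis explicitly assumed in the paragraph preceding the conjecture---namely, that the multiples of an odd prime $p$ are not all divisible by exactly the same power of $p$---the chain $A_1\supsetneq A_2\supsetneq\cdots$ is strictly nested infinitely often. Fix such a prime $p$, let $n$ be the smallest positive index with $p\mid\tau_n$, and set $q=\tau_n$ (or $q=\tau_n/2$ when $(k+1)\mid n$, accounting for the unavoidable factor of $2$ noted by Swart). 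Then $A_1$ has first term $n$ and common difference $d=2n-k-1$, and the goal is an explicit description of $A_{m+1}$ for all $m\geq 0$.

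I would proceed by induction on $m$, aiming to show that $A_{m+1}$ is the arithmetic progression with common difference $q^m d$ and first element $n+(\tfrac{q^m-1}{2}+kq^m)d$. Either direction of the stated iff would then follow by writing every element of $A_{m+1}$ in the form $n+(\tfrac{q^m-1}{2}+kq^m+jq^m)d$ for $j\in\Z$, and invoking the main theorem to rule out additional elements. The offset $\tfrac{q^m-1}{2}+kq^m$ should emerge as the unique representative modulo $q^m$ (after a shift of $k$ full periods of the coarser AP $A_m$) of the index at which the next power of $p$ first appears; the ``$(q^m-1)/2$'' piece is the midpoint of the stretch of $q^m$ candidate indices between consecutive elements of $A_m$, reflecting the centred-difference symmetry that the Somos recurrence enjoys when applied along $A_m$.

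The technical heart is an iterated $p$-adic analysis of the recurrence $\tau_{l+2}\tau_{l-2}=\alpha\tau_{l+1}\tau_{l-1}+\beta\tau_l^2$ along $A_m$. Robinson's pairwise coprimality result guarantees that the neighbours within $k$ steps of $\tau_l$ contribute no factors of $p$, so $v_p(\tau_{n+jd})$ can be read directly from the recurrence; the inductive step then reduces to verifying a lift-the-exponent style identity such as $v_p(\tau_{n+jq^{m-1}d})=v_p(\tau_{n+q^{m-1}d})+v_p(j)$. The main obstacle I foresee is producing this sharp valuation formula uniformly for both Somos-$4$ and Somos-$5$: the section~4 argument only guarantees structural periodicity, not the required multiplicative growth of valuations, and new identities specific to the recurrence---likely involving the invariant rational function $\mathcal{I}$ of the initial values mentioned in the introduction---will probably have to be invoked. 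The exceptional case $(k+1)\mid n$, where $q=\tau_n/2$, should emerge from the forward--backward symmetry of the recurrence combined with a parity argument, though integrating it cleanly into the induction may warrant a dedicated base case at $m=1$.
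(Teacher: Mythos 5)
The statement you set out to prove is presented in the paper as Conjecture~\ref{conj}; the paper offers no proof of it, only the conditional remark that it would follow if one could show that for odd primes $p$ the multiples of $p$ in the Somos-4 and Somos-5 sequences are not all divisible by exactly the same power of $p$ (equivalently, in the language of section~\ref{s7}, that all occurring powers of odd primes are regular). So there is no paper proof to compare against, and any complete argument here would be new mathematics.

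Your proposal does not close the gap: its self-described ``technical heart'', the lifting-the-exponent identity $v_p(\tau_{n+jq^{m-1}d})=v_p(\tau_{n+q^{m-1}d})+v_p(j)$, is essentially a restatement of the open content of the conjecture, and you acknowledge yourself that the divisibility theorem of section~\ref{s6} only yields that each set $A_j$ is a complete arithmetic progression --- it says nothing about the common difference of $A_{m+1}$ being exactly $q$ times that of $A_m$, nor about where the finer progression sits inside the coarser one. Swart's theorems quoted in section~\ref{s7} give the multiplicative gap structure only above the smallest occurring power $p^i$, and whether $i=1$ for every odd prime (regularity of all powers) is precisely what is unknown; the paper even records that for $p=2$ the other alternative of Swart's dichotomy holds (no power of $2$ beyond the first ever appears), which is what forces the $\tau_n/2$ correction when $(k+1)\mid n$. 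Two further points are glossed over: (i) $q=\tau_n$ is in general a product of several prime powers, not a single prime power, so even granting the valuation formula for each prime you must show that the resulting congruence conditions intersect in the single progression with the stated offset $\frac{q^m-1}{2}+kq^m$, a CRT-style alignment you have not indicated how to establish; and (ii) the offset itself is never derived --- you motivate it with a ``midpoint'' heuristic but supply no identity that would pin it down. As it stands the proposal is a plausible strategy outline for an open problem, not a proof.
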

A similar result for the extended Fibonacci sequence would provide an alternative proof for the fact that the $(m+1)$-st power of the $n$-th Fibonnaci number divides the $(nf_n^m)$-th Fibonnaci number, as was shown by M. Cavachi \cite{Cav}, at the age of 16. We note the case $n=3$ is exceptional; here $2^{m+2}\mid f_{2^m3}$.


\section{Sets of differences} \label{s2}
In \cite{Beu} a {\bf set of differences} is defined to be a subset $S$ of $\mathbb{N}$ such that $s,t\in S,\ s>t \Rightarrow s-t\in S$. A useful result is that the elements in sets of differences are multiples of the smallest element. Extending the definition to subsets of $\mathbb Z$ a similar statement may be used to provide a quick proof that elliptic divisibility sequences are divisibility sequences. 

For our purposes we define a subset $S$ of $\mathbb{Z}$ to be a {\bf modified set of differences} if $s,t\in S\Rightarrow 2s-t\in S$.
\begin{lemma} \label{lemma}
A modified set of differences has less than two elements, or its terms form a complete arithmetic sequence.
\end{lemma}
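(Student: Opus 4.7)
The plan is to reduce the statement to two short observations: starting from the pair of elements of $S$ that are closest together, the reflection operation $(x,y)\mapsto 2x-y$ forces the whole arithmetic progression through them into $S$; and no extra element can exist, because it would create a pair with a smaller positive gap.

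Assume $S$ has at least two elements. Then the set $\{b-a : a,b\in S,\ a<b\}$ is a nonempty subset of $\N$, so it has a minimum $d>0$. Pick $a_0,b_0\in S$ with $b_0-a_0=d$ and set $A=a_0+d\Z$.

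The first step is to show $A\subseteq S$. I would prove by induction on $|k|$ that $a_0+kd\in S$ for every $k\in\Z$. The base cases $k=0,1$ are $a_0,b_0\in S$. For the inductive step, given $a_0+kd\in S$ and $a_0+(k+1)d\in S$, apply the modified-set-of-differences property with $s=a_0+(k+1)d$ and $t=a_0+kd$ to get $2s-t=a_0+(k+2)d\in S$; for the backward direction, apply it with $s=a_0+kd$ and $t=a_0+(k+1)d$ to get $a_0+(k-1)d\in S$.

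The second step is to show $S\subseteq A$. Suppose for contradiction that some $c\in S$ lies outside $A$. Choose $n\in\Z$ so that $a_0+nd\le c<a_0+(n+1)d$; then $r:=c-(a_0+nd)$ satisfies $0<r<d$. But $a_0+nd\in S$ by the first step and $c\in S$ by assumption, so $r$ is a positive difference of two elements of $S$, contradicting the minimality of $d$.

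Combining the two inclusions gives $S=A$, which is a complete arithmetic sequence. The main content of the argument is really in the first step — the explicit reflection identity showing that two consecutive terms of an AP at spacing $d$ suffice to populate the entire doubly-infinite AP — after which the minimality of $d$ does the rest automatically. I do not anticipate any serious obstacle, but one needs to be slightly careful that the induction extends in both directions and that the minimum $d$ exists, for which having at least two elements is essential.
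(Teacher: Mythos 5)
Your proof is correct, and it follows the same two-phase skeleton as the paper's: first use the reflection $(s,t)\mapsto 2s-t$ to generate a full doubly-infinite arithmetic progression inside $S$, then use a minimality argument to show nothing else can belong to $S$. The difference is in which quantity you minimise. The paper minimises over \emph{elements}: it takes the smallest positive element $a$ and the next smallest $b$, sets $d=b-a$, and then proves the converse inclusion by Euclidean division, which forces a two-case analysis ($r<a$ and $r>a$) each ending in a separate contradiction. You minimise over \emph{differences}: taking $d$ to be the least positive gap between any two elements of $S$ makes the converse inclusion a one-line contradiction, since an element off the progression $a_0+d\Z$ would sit strictly between two consecutive terms of a progression already known to lie in $S$, producing a smaller positive gap. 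Your choice of invariant buys a noticeably cleaner second half at no cost (the minimum exists because positive differences of integers form a nonempty subset of $\mathbb N$, exactly as you note); the paper's version instead pins the progression to the smallest positive element, a normalisation it does not actually need in the later applications. One very minor point: your induction is better phrased as being on $m$ in the statement ``$a_0+jd\in S$ for all $-m\le j\le m+1$'' rather than ``on $|k|$'', since each step consumes the two previous consecutive terms, but this is cosmetic and the argument is sound.
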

\begin{proof}
Let $s,t$ be elements of a modified set of differences $V$. We first show, by induction,
that
\begin{equation}\label{st}
(k+1)s-kt\in V, \text{ and } (k+1)t-ks\in V,
\end{equation}
for all $k\in \mathbb Z$. By definition it is true for $k=1$. Assuming the statement we obtain that
$2s-((k+1)s-kt)=kt-(k-1)s$ and $2t-((k+1)s-kt)=(k+2)t-(k+1)s$ are in $V$, as well as the
same expressions with $s,t$ interchanged.

If $s=t$ all the above elements coincide, the set may only have one element. If $s$ and
$t$ are distinct we may take $s<t$ in which case $(k+1)t-ks$ with $k\in\mathbb N$ provides
plenty of positive elements. Let $a,b>a$ be the smallest and the next smallest positive elements of $V$ and denote $d=b-a$. As $a-d=2a-b\in V$ and $a$ is the smallest positive element we have
$a<d$. Taking $s=a$ and $t=2a-b$ in the first element of (\ref{st}) we find that $kd+a\in V$
for all $k\in\mathbb Z$. We show the converse, $s\in V \Rightarrow d \mid (s-a)$.

For any $s\in V$ we may write $s=qd+r$ with $0\leq r<d$. If $r=a$ then we are done.
Let $t=qd+a\in V$. Assume $r<a$. The element $z=(k+1)s-kt=s-k(a-r)$, with $k=\lfloor \frac{s}{a-r}\rfloor$, is in $V$ and $0\leq z < a-r$ which is impossible, as $a$ is the smallest positive element of $V$. Assuming $r>a$, we have, with $k=\lfloor \frac{t}{r-a}\rfloor$, that $p=(k+1)t-ks=t-k(r-a)$ is in $V$ and $0\leq p< r-a$. We also have, replacing $k$ with $k-1$, that $q=p+(r-a)\in V$. As $p<d<b$ we need $p=a$. But with $p=a$ we have $a<q=r<d<b$ which contradicts that $b$ is the second smallest positive element.  
\end{proof}

\section{Elliptic divisibility sequences, and integrality} \label{s3}
Elliptic divisibility sequences (EDS) were introduced by Morgan Ward \cite{War,Ward} as sequences of integers $\{a_n\}_{n=0}^\infty$, that satisfy, for all $m\geq n\geq 1$,
\begin{equation} \label{for}
a_{m+n}a_{m-n}=\left|
\begin{matrix}
a_na_{m-1} & a_{n-1}a_{m} \\
a_{n+1}a_{m} & a_na_{m+1} 
\end{matrix}\right|
\end{equation}
and $n\mid m\Rightarrow a_n\mid a_m$. He calls a sequence proper if $a_0=0, a_1=1, a_2^2+a_3^2\neq 0$, and shows that a proper solution to (\ref{for}) is an EDS if and
only if $a_2,a_3,a_4$ are integers and $a_2\mid a_4$. Ward first shows by induction, that all terms are integers, and then by another induction step, the divisibility property. 

Ward's proof of integrality is generalised by Hone and Swart \cite{HS}, who proved the following strong Laurent property for Somos-4: the terms $\tau_n$ are polynomials in $\alpha,\beta,\tau_1^{\pm1},\tau_2,\tau_3,\tau_4$, and $\mathcal I$, where
\begin{equation} \label{I}
\mathcal I = \alpha^2+\beta T,\quad
T=\frac{\tau_{1}^2\tau_{4}^2+\alpha(\tau_{2}^3\tau_{4}+\tau_{1}\tau_{3}^3)+\beta\tau_{2}^2\tau_{3}^2}{\tau_{1}\tau_{2}\tau_{3}\tau_{4}}
\end{equation}
Therefore, if $\tau_1=\pm 1$ and $\mathcal I$ is integer, the sequence consist of integers.
We remark that taking $n=2$ in (\ref{for}) gives us a special Somos-4 sequence,
\begin{equation} \label{eds}
a_{m+2}a_{m-2}=a_2^2a_{m+1}a_{m-1}-a_1a_3a_m^2.
\end{equation}
Taking $\alpha=a_2^2,\beta=-a_1a_3$, $\{\tau_i=a_i\}_{i=1}^4$ and $a_1^2=1$ in (\ref{I}) we find $\mathcal I=-a_4/a_2$, whose integrality implies that the sequence $\{a_m\}$ consists of integers.

For Somos-5 we have the following strong Laurent property \cite[Theorem 3.7]{HS}:
The terms $\tau_{n>0}$ of Somos-5 are polynomial in $\alpha,\beta,\tau_1^{\pm1},\tau_2^{\pm1},\tau_3,\tau_4,\tau_5$,
and $\mathcal J$, where
\[
\mathcal J = \beta + \alpha S,\quad
S=\frac{(\tau_{1}\tau_{5}+\alpha\tau_{3}^2)(\tau_1\tau_4^2+\tau_{2}^2\tau_5)+
\beta\tau_{2}\tau_{3}^3\tau_4}{\tau_{1}\tau_{2}\tau_{3}\tau_{4}\tau_5}.
\]
The sequence is an integral sequence if $\tau_1,\tau_2\in\{\pm 1\}$, $\mathcal I$ is integer and
one can find three consecutive integers preceding two units.

An EDS uniquely extends to a sequence over $\Z$. Taking $m=2$ in (\ref{eds}) one finds
$a_0=0$. Taking $m=1$ in (\ref{eds}) we find $a_{3}a_{-1}=-a_1a_3 a_1^2\Rightarrow a_{-1}=-1$ (assuming $a_3\neq 0$). Taking $m=-1$ and $n=k-1$ in (\ref{for}) gives
$a_{k-2}a_{-k} = - a_{k-2}a_{k}a_{-1}^2 \Rightarrow a_{-k}=-a_k$ (assuming $a_{k-2} \neq 0$). For $k=2$ this doesn't work, but there exist a determining equation, e.g. take $m=0$ in (\ref{eds}). One way to deal with the occurrence of zeros other than $a_0$ is to take the initial values as parameters (let $a_4$ be a multiple of $a_2$), generate a polynomial sequence,
and then specialise. See \cite[Appendix A]{HS} for another discussion on zeros.

Taking the initial values as parameters (or assuming them to be co-prime) one can inductively show that gcd($a_n,a_{n+1}$)=1 for all $n>1$. The divisibility property follows by showing that $V_k=\{m \in \mathbb Z : a_k \mid a_m\}$, with $k>1$, is a set of differences with at least two elements. To show this we employ a second family of recurrences
\begin{equation} \label{fora2}
a_1a_2a_{m+n+1}a_{m-n}=\left|
\begin{matrix}
a_na_{m-1} & a_{n-1}a_{m} \\
a_{n+2}a_{m+1} & a_{n+1}a_{m+2}
\end{matrix}\right| .
\end{equation}
Taking $m=s$ and $n=t-s$ in 
(\ref{for}) gives
\[
a_{t}a_{2s-t}=a_{t-s}^2a_{s-1}a_{s+1}-a_{t-s-1}a_{t-s+1}a_{s}^2.
\]
If $s\in V_k$ then $s\pm1\not\in V_k$, and thus $a_k\mid a_{t-s}^2$.
Taking $m=s$ and $n=t-s-1$ in (\ref{fora2}) gives
\[
a_1a_2a_{t}a_{2s-t+1}=a_{t-s-1}a_{t-s}a_{s-1}a_{s+2}-a_{t-s-2}a_{t-s+1}a_{s}a_{s+1},
\]
and so $a_k\mid a_{t-s-1}a_{t-s}$. Together with $a_{t-s-1}$ and $a_{t-s}$ being co-prime we find that $s,t\in V_k\Rightarrow s-t\in V_k$. As both $0$ and $k$ are elements of $V_k$ we have $V_k=k\mathbb Z$. Thus, the polynomial sequence $\{a_n\}$ is an arithmetic divisibility sequence with common difference function
\[
d(n)=\begin{cases} n & n \neq 0, \\ \infty & n=0. \end{cases}
\]
For special initial values the value of $d$ at 0 can be finite.

Ward does not give much detail about the consistency of the family of recurrences
\ref{for}. He does however provide an explicit solution for $a_n$ in terms of the
Weierstrass sigma function, and both (\ref{for}) and (\ref{fora2}) are direct consequences 
of the corresponding three-term relation, cf. \cite{Hone5}. For an algebraic approach
we refer the reader to \cite{PooSwa}.

The fact that an EDS $\{a_n\}$ is a divisibility sequence does not imply that
multiples of powers of primes are equally spaced. However, for an EDS whose
initial values satisfy gcd($a_n,a_{n+1}$)=1 the above argument shows that
$V=\{n \in \mathbb Z : p^k \mid a_n\}$, with $p\in\mathbb P$, $k\in\mathbb N$
is a modified set of differences with at least two elements, and this implies
the following theorem.
\begin{theorem}
For an EDS in which subsequent (initial) values are co-prime the multiples
of powers of primes are equally spaced.
\end{theorem}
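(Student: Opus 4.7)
The plan is to verify that $V = \{n \in \mathbb{Z} : p^k \mid a_n\}$ is a modified set of differences with at least two elements, then invoke Lemma \ref{lemma} to conclude $V$ is a complete arithmetic sequence, which is exactly the ``equally spaced'' conclusion of the theorem.

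First I would record the easy structural facts: $0 \in V$ since $a_0 = 0$, and $V$ is symmetric under $n \mapsto -n$ since $a_{-n} = -a_n$. The theorem is vacuous when $|V| = 1$, so assume $|V| \geq 2$.

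Next, given $s, t \in V$, I would need $2s - t \in V$. My approach is first to establish the classical property $s, t \in V \Rightarrow s - t \in V$ by reusing the argument of the paragraph preceding the theorem: substitute $(m,n) = (s, t-s)$ into (\ref{for}) and $(m,n) = (s, t-s-1)$ into (\ref{fora2}), and replace the divisor $a_k$ throughout by the prime power $p^k$. The primality of $p$ simplifies the coprimality extractions: $p \mid a_s$ together with $\gcd(a_s, a_{s\pm 1}) = 1$ gives $p \nmid a_{s\pm 1}$, so $\gcd(p^k, a_{s-1} a_{s+1}) = 1$, and identity (\ref{for}) then forces $p^k \mid a_{t-s}^2$. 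Once $V$ is shown to be a set of differences in $\mathbb{Z}$, it is automatically closed under sums by the symmetry of $V$ (since $-t \in V$ and $s-(-t) = s+t \in V$), hence an additive subgroup of $\mathbb Z$, hence in particular closed under $(s,t) \mapsto 2s-t = s+(s-t)$; that is, $V$ is a modified set of differences.

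The main obstacle I anticipate is the $a_{s+2}$ factor appearing in the companion identity (\ref{fora2}): to extract $p^k \mid a_{t-s-1}a_{t-s}$ from that identity one needs $\gcd(p^k, a_{s+2}) = 1$, which does not follow from consecutive coprimality alone. A workable route is to use (\ref{eds}) to show $\gcd(a_s, a_{s+2}) \mid a_2^2$: if $p \nmid a_2$ the argument goes through unchanged, while if $p \mid a_2$ then $2 \in V$ and, together with $0 \in V$ and $V \subsetneq \mathbb Z$ (otherwise $p \mid a_1 = 1$), $V$ is already forced into $2\mathbb Z$, so this sub-case reduces to a separate short analysis. Once this bookkeeping is complete, Lemma \ref{lemma} finishes the proof.
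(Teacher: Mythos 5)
Your overall route is exactly the paper's: substitute $(m,n)=(s,t-s)$ into (\ref{for}) and $(m,n)=(s,t-s-1)$ into (\ref{fora2}) to show that $V=\{n:p^k\mid a_n\}$ is closed under differences, upgrade this (via $0\in V$ and $a_{-n}=-a_n$) to $V$ being a subgroup of $\Z$, and finish with Lemma \ref{lemma}; the paper's own proof is just the sentence preceding the theorem, which points back to the same two identities. You also deserve credit for isolating the one genuinely delicate step, which the paper passes over in silence: extracting $p^k\mid a_{t-s-1}a_{t-s}$ from (\ref{fora2}) requires $p\nmid a_{s+2}$, while consecutive coprimality only gives $p\nmid a_{s\pm1}$. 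Your reduction of that obstacle to the case $p\mid a_2$ is sound: $\gcd(a_s,a_{s+2})\mid a_2^2$ does follow from (\ref{eds}) together with $\gcd(a_s,a_{s\pm 1})=1$, so for $p\nmid a_2$ the extraction goes through unchanged.

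The gap is that the branch $p\mid a_2$ is not actually dealt with. First, a slip: for $k\ge 2$, $p\mid a_2$ puts $2$ in $\{n:p\mid a_n\}$ but not in $V=\{n:p^k\mid a_n\}$. (For $k=1$ there is no problem anywhere, since $p\mid a_{t-s}^2$ already forces $p\mid a_{t-s}$ from (\ref{for}) alone.) Second, and more importantly, knowing $V\subseteq 2\Z$ does not neutralize the obstruction --- it makes it bite: when $p\mid a_2$ and $s$ is even, $p$ genuinely divides $a_{s+2}$, so (\ref{fora2}) only yields $p^k\mid a_{t-s}a_{s+2}$ after stripping the factors coprime to $p$, and the two available valuation inequalities, $2v_p(a_{t-s})\ge k$ and $v_p(a_{t-s})+v_p(a_{s+2})\ge k$, do not imply $v_p(a_{t-s})\ge k$. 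So the ``separate short analysis'' is a genuinely missing argument rather than bookkeeping; closing it seems to require something extra, such as passing to the subsequence $a_{2m}/a_2$ or the elliptic-curve methods of Swart's thesis. To be fair, the paper's one-line proof has exactly the same hole, but since you explicitly flagged the problem you should either complete that case or restrict the statement to primes $p\nmid a_2$.
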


\section{Compagnion elliptic divisibility sequences} \label{s4}
Taking unit initial values for Somos-4,
\begin{equation} \label{iv4}
\tau_1=\tau_2=\tau_3=\tau_4=1,
\end{equation}
we have $T=1+2\alpha+\beta$, and
$\mathcal I = \alpha^2+\beta T = (\alpha+\beta)^2+\beta$.
We define \cite[Definition 1]{HS} an EDS by (\ref{eds}) and the initial values
\begin{equation}\label{ive}
a_2=-\sqrt{\alpha},\ a_3=-\beta,\ a_4=\sqrt{\alpha}\mathcal I,
\end{equation}
so $\{a_n\}$ satisfies the same recurrence as $\{\tau_n\}$.
The sequence $\{a_n\}$ is the companion EDS for Somos 4, that is, the following families of recurrences
are satisfied \cite[corollaries 1.2, 1.3]{Hone5},
\begin{equation} \label{for1}
\tau_{m+n}\tau_{m-n}=\left|
\begin{matrix}
a_n\tau_{m-1} & a_{n-1}\tau_{m} \\
a_{n+1}\tau_{m} & a_n\tau_{m+1} 
\end{matrix}\right| ,
\end{equation}
and
\begin{equation} \label{for2}
a_1a_2\tau_{m+n+1}\tau_{m-n}=\left|
\begin{matrix}
a_n\tau_{m-1} & a_{n-1}\tau_{m} \\
a_{n+2}\tau_{m+1} & a_{n+1}\tau_{m+2}
\end{matrix}\right| .
\end{equation}
Proofs of these facts can be found in \cite{Hone5,PooSwa}, cf. \cite{HS}.

To describe the companion EDS for Somos-5 with initial values
\begin{equation} \label{iv5}
\tau_1=\tau_2=\tau_3=\tau_4=\tau_5=1,
\end{equation}
we introduce an alternating sequence of functions
\[
h_{l}=\begin{cases}
2\alpha+\beta & l\equiv 0 \mod 2, \\
\alpha + 1 & l\equiv 1 \mod 2,
\end{cases}
\]
whose product equals the above mentioned invariant $\mathcal J=h_lh_{l+1}$, see
\cite[Proof of Theorem 3.7]{HS}. The companion EDS is then given by
\[
a_1=1,\ a_2=\sqrt{h_{m+n}},\ a_3=\alpha,\ a_4=-\beta a_2,
\]
and
\[
a_{k+2}a_{k-2}=h_{m+n+k}a_{k+1}a_{k-1}-\alpha a_k^2.
\]
We note that these are actually two companion sequences, one
for each value of $n+m\mod 2$, and we have
\[
\frac{a_k^2(m+n \equiv i)}{a_k^2(m+n \equiv j)}=\frac{h_i}{h_j},
\]
so any polynomial divisor of $a_k(0)$ is also a divisor of $a_k(1)$ and visa versa.
The terms of the Somos-5 sequence satisfy the same families of recurrences
(\ref{for1},\ref{for2}), see also \cite[corollary 2.12]{Hone5} and \cite[Section 7]{PooSwa}.

\section{Relative primeness} \label{s5}
Both $\alpha$ and $\beta$ are not divisors of $\tau_n$. This can be seen
by taking $\alpha=0$ or $\beta=0$. The corresponding solutions, for Somos-4,
are $\tau_n=\beta^{k_n}$ and $\tau_n=\alpha^{l_n}$, where  $k_n$ and $l_n$
satisfy the linear recurrences
\[
k_{n+2}=2k_n-k_{n-2}  + 1,\quad  l_{n+2} = l_{n+1} + l_{n-1} - l_{n-2}  + 1.
\]
In particular, they do not vanish. If $\alpha$ and $\beta$ have a divisor in common its
multiplicity will grow as (\cite[A249020]{OEIS}). Considering the terms
$\tau_n$ as polynomials in $\alpha$ and $\beta$, we can adjust the argument
of Bergman, cf. \cite{Gal}, to prove that any four consecutive terms of our polynomial
Somos-4 sequence are pairwise co-prime. The initial terms (\ref{iv4}) are co-prime. Assume that
$\{\tau_{n+i}\}_{i=-2}^1$ are pairwise co-prime, and let $p$ be an irreducible divisor of $\tau_{n+2}$ with positive degree.
Obviously $p$ does not divide $\alpha$ or $\beta$, and therefore $p\mid a_n$ if and only if
$p\mid a_{n-1}$ or $p\mid a_{n+1}$. By hypothesis this does not happen. Similarly for Somos-5
with pairwise co-prime initial values any five consecutive terms are pairwise co-prime.

\section{Divisibility in Somos-4 and Somos-5} \label{s6}

\begin{theorem}
The Somos-$k$ sequences, for $k=4,5$, with initial values $\tau_1=\cdots=\tau_k=1$ are arithmetic divisibility sequences with common difference function $d(n)=2n-k-1$.
\end{theorem}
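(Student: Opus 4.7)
The plan is to prove the divisibility one irreducible factor at a time, then invoke Lemma~\ref{lemma}. Fix an index $n$, let $p$ be an irreducible polynomial in $\Z[\alpha,\beta]$ with $p^j\mid\tau_n$, and set $V_{p,j}=\{m\in\Z : p^j\mid\tau_m\}$. If I can show that $V_{p,j}$ is a modified set of differences, then (provided it has at least two elements) Lemma~\ref{lemma} turns it into a complete arithmetic progression. The palindromic symmetry $\tau_m=\tau_{k+1-m}$, immediate from the invariance of the Somos-$k$ recurrence under $m\mapsto k+1-m$ combined with palindromic unit initial data, places both $n$ and $k+1-n$ in $V_{p,j}$. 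Except in the case $n=(k+1)/2$---impossible for $k=4$, and for $k=5$ it forces $n=3$, where $d(3)=0$ makes the theorem trivial---these two elements differ, so the common difference $e$ of the arithmetic progression $V_{p,j}$ divides $(k+1-n)-n=-d(n)$. Consequently $n+l\,d(n)\in V_{p,j}$ for every $l\in\Z$, and intersecting over the finitely many pairs $(p,j)$ with $p^j\mid\tau_n$ yields $\tau_n\mid\tau_{n+l\,d(n)}$.

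The main obstacle is the modified-set-of-differences property. I would combine the companion-EDS identities (\ref{for1}) and (\ref{for2}) (valid for both Somos-$4$ and Somos-$5$, per section~\ref{s4}) with: (i) coprimality of $k$ consecutive terms of $\{\tau_m\}$ from section~\ref{s5}, giving $v_p(\tau_{s\pm1})=v_p(\tau_{s+2})=v_p(\tau_{t+1})=0$ whenever $p$ divides $\tau_s$ and $\tau_t$; (ii) coprimality $\gcd(a_r,a_{r+1})=1$ of consecutive companion-EDS terms, established by the same Bergman-style induction from their initial values; and (iii) $p\nmid\alpha,\beta$, again from section~\ref{s5}, so $v_p(a_1 a_2)=0$.

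Let $s,t$ be distinct elements of $V_{p,j}$, with $\sigma=v_p(\tau_s)$, $\theta=v_p(\tau_t)$, both $\geq j$. Reducing (\ref{for1}) at $m=s$, $n=t-s$ modulo $p$ forces $p\mid a_{t-s}$, whence $v_p(a_{t-s\pm1})=0$ by (ii). Taking $m=s$, $n=t-s-1$ in (\ref{for2}) gives
\[
a_1 a_2\tau_t\tau_{2s-t+1}=a_{t-s-1}a_{t-s}\tau_{s-1}\tau_{s+2}-a_{t-s-2}a_{t-s+1}\tau_s\tau_{s+1};
\]
the LHS has $p$-valuation $\geq\theta\geq j$, the second RHS summand has $p$-valuation $\geq\sigma\geq j$, and the first has $p$-valuation equal to $v_p(a_{t-s})$. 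A short case analysis---on whether the two RHS summands have equal or unequal $p$-valuations---forces $v_p(a_{t-s})\geq j$. Finally, taking $m=s$, $n=t-s$ in (\ref{for2}),
\[
a_1 a_2\tau_{t+1}\tau_{2s-t}=a_{t-s}a_{t-s+1}\tau_{s-1}\tau_{s+2}-a_{t-s-1}a_{t-s+2}\tau_s\tau_{s+1},
\]
both RHS summands have $p$-valuation $\geq j$ (the first via $a_{t-s}$, the second via $\tau_s$), while the prefactor $a_1 a_2\tau_{t+1}$ is a unit at $p$; hence $v_p(\tau_{2s-t})\geq j$, i.e., $2s-t\in V_{p,j}$. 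The symmetric closure $2t-s\in V_{p,j}$ is obtained by the identical argument with $s,t$ exchanged, using $a_{-r}=-a_r$ to preserve the form of the identities.
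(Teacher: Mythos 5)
Your proposal is correct and follows essentially the same route as the paper: the same specialisations $m=s$, $n=t-s$ and $n=t-s-1$ of the companion-EDS identities (\ref{for1}) and (\ref{for2}), combined with the coprimality facts of section \ref{s5}, to show that the divisor sets are modified sets of differences, then Lemma \ref{lemma} plus the palindromic symmetry $\tau_m=\tau_{k+1-m}$ to extract the common difference $d(n)=2n-k-1$. The only (harmless) cosmetic difference is that you run the argument separately for each prime power $p^j\mid\tau_n$ and intersect, where the paper works directly with a general $q$ (e.g.\ $q=\tau_n$ or $q=p^h$), and you omit the paper's closing degree argument showing $d(n)$ is the exact gap, which is not needed for the statement as given.
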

\begin{proof}
We first assume that the initial values are co-prime and show that the set $W_q=\{n \in \mathbb Z : q \mid \tau_n\}$ is a modified set of differences. Here $\tau_n$ is a solution of either Somos-4 or Somos-5,
and we will take $q$ to be any polynomial in the ring $\mathbb Z[\alpha,\beta,\mathcal I,\tau_1^{\pm1},\tau_2,\tau_3,\tau_4]$,
or $\mathbb Z[\alpha,\beta,\mathcal J,\tau_1^{\pm1},\tau_2^{\pm1},\tau_3,\tau_4,\tau_5]$, respectively. Of course, one might take $q=\tau_k$, or let $q=p^h$ where $p$ does not occur in a denominator. Taking $m=s$ and $n=t-s$ in (\ref{for1}) gives us
\[
\tau_{t}\tau_{2s-t}=a_{t-s}^2\tau_{s-1}\tau_{s+1} - a_{t-s-1}a_{t-s+1}\tau_{s}^2.
\]
If $s,t\in W_q$ then $s\pm 1 \not\in W_q$ and hence $q\mid a_{t-s}^2$. Taking $m=s$ and $n=t-s-1$ in (\ref{for2}) yields $q\mid a_{t-s}a_{t-s-1}$, and hence $q\mid a_{t-s}$. Taking $m=s$ and $n=t-s$ in (\ref{for2}) gives us
\[
a_1a_2\tau_{t+1}\tau_{2s-t}=a_{t-s}a_{t-s+1}\tau_{s-1}\tau_{s+2} - a_{t-s-1}a_{t-s+2}\tau_{s}\tau_{s+1}.
\]
As $q\nmid a_1a_2\tau_{t+1}$ we obtain $2s-t\in W_q$ which is what we wanted to show. By lemma
\ref{lemma} it follows that $W_q$ has less than 2 elements or there exist $n,d\in\mathbb N$ such that
$m\in W_p\Rightarrow d\mid m-n$. 

We now specialise to initial values $\tau_1=\cdots=\tau_k=1$. Due to the symmetry of the recurrences, $\tau_{n+l}\leftrightarrow \tau_{n-l}$ for all $l$, we obtain $\tau_{-n+(k+1)/2}=\tau_{n+(k+1)/2}$ for all $n$.
In terms of $d=2n-k-1$ we have $\tau_{n-d}=\tau_n$.
As the degree of $\tau_k$, with $0<k<n$, is smaller than the degree of $\tau_n$, we have
$\tau_n \nmid \tau_k$ and hence $d$ is the common difference.
\end{proof}

\section{Equivalent sequences}
It is possible to choose other initial values with the same symmetry. For Somos-4, taking $\tau^\prime_1=-\tau^\prime_2=-\tau^\prime_3=\tau^\prime_4=1$ one gets the polynomial sequence
$\tau_n^\prime(\alpha,\beta)=(-1)^{\lfloor n/2 \rfloor}\tau_n(-\alpha,\beta) $, where $\tau_n(\alpha,\beta)$ is the sequence
obtained from $\tau_1=\tau_2=\tau_3=\tau_4=1$. More generally, starting from $\tau^\prime_1=\tau^\prime_4=1$
and $\tau^\prime_2=\tau^\prime_3=\gamma$ we find a sequence in $\mathbb Z[\alpha,\beta, \gamma^{\pm 1}]$, 
\begin{equation} \label{mg}
\tau_n^\prime(\alpha,\beta)=\frac{\tau_n(\gamma^3\alpha,\gamma^4\beta)}{\gamma^{(n-1)(n-4)/2}}.
\end{equation}
If $\beta=\gamma^2$ the sequence is polynomial, and for $n>4$, with $d=2n-5$,
\[
\begin{cases}
n\equiv 1 \mod 3: & \tau^\prime_n \mid \tau^\prime_m \Leftrightarrow d \mid (m-n), \\
n\not\equiv 1 \mod 3: & \tau^\prime_n \mid \tau^\prime_m \Leftrightarrow d \mid (m-n) \text{ and } \frac{m-n}{d} \not\equiv 1 \mod 3.
\end{cases}
\]
Furthermore, starting from $\tau^\prime_1=\tau^\prime_4=\delta$ and $\tau^\prime_2=\tau^\prime_3=\gamma$ we find a sequence in $\mathbb Z[\alpha,\beta, \gamma^{\pm 1},\delta^{\pm 1}]$, 
\begin{equation} \label{mgs}
\tau_n^\prime(\alpha,\beta)=\frac{\delta^{(n-2)(n-3)/2} }{\gamma^{(n-1)(n-4)/2}}\tau_n\left(\left(\frac{\gamma}{\delta}\right)^3\alpha,\left(\frac{\gamma}{\delta}\right)^4\beta\right).
\end{equation}
Both (\ref{mg}) and (\ref{mgs}) have the same divisibility properties as $\tau_n(\alpha,\beta)$.

For Somos-5, starting from $\tau^\prime_1,\ldots,\tau^\prime_5=a,b,c,b,a $ we have
\begin{equation}
\tau_n^\prime(\alpha,\beta)=\frac{a^{A_n}b^{B_n}}{c^{C_n}}\tau_n\left(\left(\frac{c}{a}\right)^2\alpha,\left(\frac{c}{a}\right)^3\beta\right),
\end{equation}
where
\[
A_n=\frac{n^2}{4}-\frac{3n}{2}+\frac{17-(-1)^n}{8},\quad B_n=\frac{1+(-1)^n}{2},\quad C_n=\frac{n^2}{4}-\frac{3n}{2}+\frac{13+3(-1)^n}{8}.
\]
We get a polynomial sequence from initial values $\tau_1=\tau_5=1$, $\tau_2=\tau_4=b$, and $\tau_3=\alpha$. Here we find the divisibility, with $d=2n-6$, $\tau_n \mid \tau_m \Leftrightarrow d \mid (m-n)$ for $n>5$, $\tau_n\mid\tau_m \Leftrightarrow n \mid m$ for $n=2,3$. 

We note the sequences $\tau$ and $\tau^\prime$ in this section are equivalent sequences in the sense of \cite[Sect. 6.3]{Swa}, and that the slightly different divisibility properties are due to initial values having a common factor.

\section{Robinson's observations} \label{s7}
We state here some observations made in \cite{Rob} and their current status.
Robinson observed that
\begin{enumerate}
\item the multiples of primes are equally spaced [such a prime is called {\em regular}]
\item the gap (the common difference) is never much larger than $p$
\item if $p$ occurs then $p^2$ occurs, and its gap is $p$ times the gap of $p$
\item if $p^i$ is the smallest occurring power, the gap of $p^{i+l}$ will be $p^l$
times the gap of $p^i$
\end{enumerate}
Much of this is now understood, but not all. Hone and Swart \cite{Hone4,Swa,HS} (as well as Naom Elkies, David Speyer, and Nelson Stephens in unpublished work, cf. \cite{Propp}) have shown that the terms $\tau_n$ of a Somos-4 sequence correspond to rational points $Q+[n]P$ on an associated elliptic curve $E$. Christine Swart, in her thesis \cite{Swa}, studies elliptic curves over $\mathbb Z_{p^r}$; due to an equivalence
\[
\tau_n \equiv 0 \mod p^r \Leftrightarrow Q+[n]P={\mathcal O}_{p^r},
\]
if $p^r$ occurs, the gap of $p^r$ equals the order $N_r$ of the (non-singular) point $P$ in $E({\mathbb Z}_{p^r})$. Swart has proved that either all powers $p^k$ are regular, or all multiples of $p$ are divisible by exactly the same power of $p$ \cite[Thm 7.6.6]{Swa}. She obtained the structure of the gap-function \cite[Thm 7.6.7]{Swa} and explained (and improved)
Robinson's bound on the gap exploiting the Hasse bound which bounds a multiple of the order of $P$, i.e. the number of points on $E(\mathbb F_p)$ within $2\sqrt{p}$ of $p+1$ \cite[Thm 7.6.5]{Swa}.  Armed with the above mentioned theorem, the gap can be calculated from the order of $P$ on $E(\mathbb F_p)$.

We conclude with a couple of examples of interest, using curves $E$ and points $P$ as given in \cite{HS}.
\begin{itemize}
\item Somos-4 with $\alpha=\beta=\tau_1=\cdots=\tau_4=1$. The prime 2 divides $\tau_m$ if and only if $5$ divides $m$. Higher  powers of 2 do not appear (the sequence mod 4 is periodic with period 10 and does not contain 0). It seems that for all
$p\neq 2$ such that $\exists n\in\mathbb N:\ p\mid \tau_n$, all powers $p^k$ are regular, and $N_r=p^{r-1}N_1$, where $N_1$
is the order of the point (1,1) on the curve $y^2=4(x^3-x)+1$ modulo $p$. 
To prove conjecture \ref{conj} it suffices to show that all powers of odd primes occurring are regular, one
does not need $w=1$ for all $p$.
\item Somos-4 with $\alpha=-1,\ \beta=2,\ (\tau_1,\cdots,\tau_4)=(1,1,2,3)$ (which extends the
Fibonacci sequence). For $p=2$ we have $w=1$ and $v=2$, for $p=3$ we have $w=1$ and $N_1=4$.
It seems that all powers of occurring primes are regular, $N_r=p^{r-w}N_1$ where $N_1$ is the order of the point $(\frac{7}{12},\sqrt{-1})$ on the singular curve
\[
y^2=\frac{(6x-5)(5+12x)^2}{216}
\]
over the field $\mathbb F_p[\sqrt{-1}]$. Again, we have not found a value of $w$ different than 1.
\item Somos-4 with $\alpha=4,\ \beta=9,\ \tau_1=\tau_4=1,\ \tau_2=\tau_3=3$.
We have $3^k\mid \tau_m \Leftrightarrow k=1,\ 3\nmid m$. Taking $p=5$ we find
$N_{r\geq3}=5^{r-3}N_1$ where $N_1=7$ is the order of the point $(55750/243, 2)$ on the curve given by
\[
y^2=4x^3-\frac{12428112196}{19683}x +\frac{1385503884676628}{14348907}
\]
over $\mathbb F_5$.
\item Somos-4 with $\alpha=2,\ \beta=5,\ (\tau_1,\ldots,\tau_4)=(1,3,2,5)$. We have
$7^k\mid \tau_m \Leftrightarrow k=2,\ N_1\mid m$, where $N_1=10$ is the order of
$P=(223081/21600, \sqrt{2})$ on
\[
y^2=4x^3-\frac{48492460561}{38880000}x +\frac{10678311547192441}{1259712000000}
\]
over $\mathbb F_7$ or over $\mathbb F_7[\sqrt{2}]$.
\end{itemize}

\section*{Acknowledgments}
I would like to thank Reinout Quispel for sharing thoughts on pleasant train rides to work, Bas Edixhoven and Jan Tuitman for useful discussions and preliminary calculations on elliptic curves during the 2014 FoCM conference, and John Roberts for suggesting the book \cite{EPSW}, which cites the thesis of Christine Swart, during the workshop on algebraic, number theoretic and graph theoretic aspects of dynamical systems at UNSW. I am grateful for travel support from AMSI, the ARC, and the La Trobe University discipline research program for mathematical \& computing sciences.

\end{document}